\newlength{\labwidth}
\newcommand{\labarrow}[1]{
\settowidth{\labwidth}{$\scriptstyle \;\; #1 \;\;$}
\stackrel{#1}{\smash{\hbox to \labwidth{\rightarrowfill}}
\vphantom{\longrightarrow}}
}
\newcommand{\sC}{{\mathcal C}}
\newcommand{\sD}{{\mathcal D}}
\newcommand{\sI}{{\mathcal I}}
\newcommand{\sS}{C\mbox{-alg}}
\newcommand{\ra}{\rightarrow}                   
\newcommand{\lra}{\longrightarrow}              
\newcommand{\la}{\leftarrow}                    
\newcommand{\holim}{\mathop{{\rm holim}}}
\newcommand{\ds}{{/\kern-3pt/}}
\newtheorem{thm}{Theorem}[section]
\newcounter{numerierer}
\newcounter{leer}
\newtheorem{defn}[thm]{Definition}
\theoremstyle{definition}  
\newenvironment{definition}{\begin{defn}\rm}{\end{defn}}
\newtheorem{example}[thm]{Example}
\subjclass{}
\begin{document}

\title{A note on the realization of relative $h_\infty$-diagrams}
\author{Gerd Laures}

\address{ Fakult\"at f\"ur Mathematik,  Ruhr-Universit\"at Bochum, NA1/66, D-44780 Bochum, Germany}
\setcounter{section}{1}

\subjclass[2010]{Primary 55N34; Secondary 55P50, 57R20}

\date{\today}

\begin{abstract}
We prove a relative version of the realization theorem for $h_\infty$-diagrams in case that the underlying diagram subcategory is factorization-closed.
\end{abstract}

\maketitle

The rigidification of diagrams which only commute up to homotopy has been investigated  by various authors.  The most striking result goes back to Dwyer-Kan-Smith-Stover  \cite{MR984042} and Devinatz-Hopkins \cite[Theorem 3.2]{MR2030586}. They  proved  realizability in the case that  all mapping spaces have contractible path components.  In  this brief note we prove a relative version of this theorem.  The source subcategory should have the same objects and its morphisms should be factorization-closed. This allows us to neglect morphisms for which the path components are not contractible but for which the diagram already strictly commutes. We will show that the explicit proof given in \cite[Theorem 3.2]{MR2030586} carries over from the absolute to the relative situation.

\begin{definition}
A subcategory ${\sC}$ of a category  ${\sD}$ is called {\em factorization-closed} if all composable morphisms $\alpha, \beta \in {\sD}$ satisfy
$$ \alpha \beta  \in {\sC} \Longrightarrow  \alpha   \in {\sC} \mbox{ and }  \beta  \in {\sC}.$$
\end{definition}
\begin{example} An important example looks like
\bigskip
\begin{center}
$ \xymatrix{
\bullet  \ar@<3pt>[rr] \ar@<-3pt>[rr]\ar@{-->}[d]&&
\bullet    \ar@<4pt>[rr]\ar@<-4pt>[rr]\ar[rr]\ar@<0pt>[ll]\ar@{-->}[d]&&
\bullet  \ar@<6pt>[rr]\ar@<-6pt>[rr]\ar@<2pt>[rr]\ar@<-2pt>[rr]
\ar@<2pt>[ll]\ar@<-2pt>[ll]\ar@{-->}[d]
&&\ldots \ar@<0pt>[ll]\ar@<-4pt>[ll]\ar@<4pt>[ll]\\
\bullet  \ar@<3pt>[rr]\ar@<-3pt>[rr]&&
\bullet    \ar@<4pt>[rr]\ar@<-4pt>[rr]\ar[rr]\ar@<0pt>[ll]&&
\bullet  \ar@<6pt>[rr]\ar@<-6pt>[rr]\ar@<2pt>[rr]\ar@<-2pt>[rr]
\ar@<2pt>[ll]\ar@<-2pt>[ll]
&&\ldots \ar@<0pt>[ll]\ar@<-4pt>[ll]\ar@<4pt>[ll]
}$
\end{center}
\bigskip
where the dashed arrows do not belong to $\sC$. That is, 
 $ {\sC}$ consists of two copies of the simplicial category $ \Delta $ and ${\sD}$ is the category which classifies morphisms between cosimplicial objects. 
\end{example}
Suppose $\sC$ is a subcategory of a small category $\sD$ with the same objects. For the target category of our diagrams,  let $C$ be a good choice of an $A_\infty$-operad. More precisely,  assume that the augmented simplicial spectrum $C^{\bullet +1}E$ is Reedy cofibrant for all cofibrant $C$-algebras $E$. Write ${\sS}$ for the category of $A_\infty$-ring spectra.  We will study the problem of finding a {\em realization} of a  diagram $(X,\tilde{Y})$ of the form 
$$ \xymatrix{\sC \ar@{>->}[d]\ar[r]^X & \sS \ar[d]^\pi \\ 
\sD  \ar[r]^{\tilde{Y}} & Ho( \sS )}$$
This means we are looking for  functors $Y:\sD\ra \sS$ and $X_i: \sC \ra \sS$, $1\leq i\leq  n$ and a zigzag of weak equivalences
$$\xymatrix{ Y\mid_{\sC} \ar[r]^{\varphi_1}  & X_1 &\ar[l]_{\varphi_2} X_2 \ar[r]^{\varphi_3}&  \cdots &  X_n \ar[l]_{\varphi_n}  \ar[r]^{\varphi_{n+1}} & X}$$
with the property that 
$\varphi_{n+1}\cdots \varphi^{-1}_{2}\varphi_1: \pi Y \ra \bar{Y}$ is a natural equivalence. Here,  the last arrow will point to the left if $n$ is odd.
\begin{definition}
Let $Z=(X,\bar{Y})$  be a diagram as above. We say that $Z$ is an $h_\infty$-diagram if for each morphism  $\alpha: j_1 \ra j_2$ in $\sD$ which is not contained in $\sC$ the space $\sS(Xj_1,Xj_2)_{\bar{Y}\alpha}$ is contractible. Here,  $\sS(Xj_1,Xj_2)_{\bar{Y}\alpha}$ is the path component of $\sS(Xj_1,Xj_2)_{\bar{Y}\alpha}$ which contains $\bar{Y}\alpha$.
\end{definition}
\begin{thm}\label{h infty}
Suppose ${\sC}$ is a factorization-closed subcategory of $\sD$. Then every  $h_\infty$-diagram is realizable.
\end{thm}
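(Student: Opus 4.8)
The plan is to mimic the Dwyer--Kan--Smith obstruction theory for realizing homotopy-coherent diagrams, but carried out \emph{relative} to the strictly commuting subdiagram indexed by $\sC$. Recall that the absolute statement says a diagram $\bar{Y}\colon \sD \ra \Ho(\sS)$ lifting a strict $X\colon \sC \ra \sS$ is realizable provided all the relevant mapping spaces $\sS(Xj_1,Xj_2)_{\bar{Y}\alpha}$ are contractible; the obstructions to successively rigidifying live in the Bousfield--Kan type spectral sequence built from the cohomology of $\sD$ with coefficients in the homotopy groups of these mapping spaces, and contractibility kills everything. The point of the factorization-closed hypothesis is exactly that it lets us ignore the morphisms already living in $\sC$ (where the diagram is strict and nothing needs to be rigidified) and only impose contractibility on the genuinely new morphisms $\alpha \in \sD \setminus \sC$.

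First I would set up the simplicial-resolution model: replace $\sD$ by a cofibrant resolution in the model category of $\sD$-diagrams in $\sS$, or equivalently work with the standard cosimplicial replacement via $\cosk$-towers of the nerve of the under-categories. Concretely, one forms the tower of approximations $Y^{(n)}$ by induction over the Reedy/skeletal filtration of $\sD$ (using the $n$-skeleton of the relevant bar construction), where $Y^{(0)}$ is just the restriction data, and at each stage extending $Y^{(n-1)}$ to $Y^{(n)}$ is controlled by a mapping space into the space of $n$-simplices. Here is where the hypothesis on $C$ enters: because $C^{\bullet+1}E$ is Reedy cofibrant for cofibrant $C$-algebras $E$, the cosimplicial resolutions of the objects $Xj$ are homotopically well-behaved, so that $\sS(Xj_1, Xj_2)$ really computes the derived mapping space and the obstruction groups are what one expects. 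The key step is to identify, at each stage $n$, the obstruction to extending as an element of $H^{n+2}$ and the indeterminacy as a torsor under $H^{n+1}$ of a cochain complex whose $\sD$-module of coefficients is $\alpha \mapsto \pi_{n}\bigl(\sS(Xj_1,Xj_2)_{\bar{Y}\alpha}\bigr)$.

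The relative refinement is the heart of the argument: one restricts attention to the sub-cochain-complex (or rather the quotient) that only sees morphisms $\alpha \notin \sC$, using factorization-closedness to check this is well-defined --- if $\alpha\beta \in \sC$ then both $\alpha,\beta \in \sC$, so a composite that ``leaves'' $\sC$ cannot be built from morphisms all of which stay in $\sC$, which is exactly what makes the normalized cochain complex split off a piece supported away from $\sC$. On that piece every coefficient group $\pi_n(\sS(Xj_1,Xj_2)_{\bar{Y}\alpha})$ vanishes by the $h_\infty$-hypothesis (contractible path component), so all obstruction and indeterminacy groups vanish and the inductive extension goes through, assembling in the (homotopy) limit to the desired $Y$ together with the zigzag $\varphi_\bullet$ of weak equivalences. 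I would then check separately that the composite $\varphi_{n+1}\cdots\varphi_1$ induces the prescribed $\bar{Y}$ on $\Ho(\sS)$ --- this is automatic from the construction since each stage is built to lie over $\bar{Y}$.

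The main obstacle I anticipate is \emph{not} the vanishing statement, which is formal once the setup is right, but rather setting up the relative obstruction theory cleanly enough that factorization-closedness is precisely the combinatorial hypothesis needed --- i.e.\ verifying that the decomposition of the cochain complex of $\sD$ into a ``$\sC$-part'' (where $X$ is already strict, contributing nothing new) and a ``complementary part'' (supported on morphisms leaving $\sC$) is compatible with the cosimplicial differentials. One has to be careful that degeneracies and codegeneracies in the bar resolution of $\sD$ do not mix the two parts in a way that obstructs the induction; factorization-closedness is exactly what guarantees the face maps respect the filtration ``$\sC$ versus not-$\sC$''. A secondary technical point is ensuring the chosen resolutions of the $Xj$'s can be taken functorially in $\sC$, so that restricting a resolved $Y$ to $\sC$ recovers (up to the zigzag) the given strict $X$; this is where one invokes the good behaviour of $C$-algebra resolutions assumed at the outset.
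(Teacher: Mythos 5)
Your strategy is sound in outline, but it is a genuinely different route from the paper's, and the step you yourself flag as ``the main obstacle'' is precisely the step the paper's argument is designed to avoid. You propose an inductive obstruction theory over the skeletal filtration of (the nerve of) $\sD$, with obstruction and indeterminacy groups in the cohomology of $\sD$ with coefficients in $\pi_*\bigl(\sS(Xj_1,Xj_2)_{\bar{Y}\alpha}\bigr)$, and then argue that the cochain complex splits into a $\sC$-part (already strict, nothing to do) and a complementary part (where the coefficients vanish by the $h_\infty$-hypothesis). The paper instead writes down the realization in closed form: for each morphism $\alpha$ it sets $M(X_j,X_{j'})_\alpha$ equal to the one-point space $\{X\alpha\}$ when $\alpha\in\sC$ and to (a realization of) the path component $\sS(X_j,X_{j'})_{\bar{Y}\alpha}$ otherwise, assembles these into a modified cosimplicial replacement $\Pi^*_h\mu_j^*Z$ over each under-category $\sD\backslash j$, and defines $Y(j)$ as its totalization. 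Factorization-closedness enters only once, to make the composition pairing $M(X_{j'},X_{j''})_{\alpha'}\times M(X_j,X_{j'})_{\alpha}\ra M(X_j,X_{j''})_{\alpha'\alpha}$ well-defined and associative: without it a composite $\alpha'\alpha\in\sC$ could have a factor outside $\sC$, and one would be forced to map a whole space onto the single prescribed point $X(\alpha'\alpha)$. The comparison with $\bar{Y}$ and with $X|_{\sC}$ is then a Bousfield--Kan spectral sequence computation using that the identity is initial in $\sD\backslash j$; no induction and no obstruction classes appear. This is the same combinatorial role you assign to factorization-closedness (face maps of the bar construction preserve the ``$\sC$ versus not-$\sC$'' dichotomy), but packaged so that the verification is a one-line associativity check rather than a compatibility statement about a filtered cochain complex.

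Two concrete cautions about your version. First, the justification you give for the splitting is logically reversed in one place: from ``$\alpha\beta\in\sC$ implies $\alpha,\beta\in\sC$'' you conclude that a composite outside $\sC$ cannot have all factors in $\sC$, but that already follows from $\sC$ being closed under composition; what you actually need is the contrapositive of factorization-closedness, namely that composing with a morphism outside $\sC$ never lands back in $\sC$, so that the non-$\sC$ chains form a subcomplex stable under the face maps. You do state the correct form at the end, but the argument as written conflates the two implications. Second, your proof is not complete as it stands: the existence of the relative obstruction theory with the claimed obstruction groups, and the compatibility of the $\sC$/non-$\sC$ decomposition with codegeneracies as well as faces, are asserted rather than established, and these are exactly the points where the work lies. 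If you want to pursue your route you should either carry out that setup in detail or, more economically, adopt the paper's device of building the contractibility into the definition of the cosimplicial replacement itself.
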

\begin{proof}
We adopt the notation of \cite{MR2030586} and write $\Gamma T$ for the realization of the singular simplicial set of an unbased topological space $T$.

For a morphism  $\alpha: j \ra j'$ in $\sD$ set
$$ M(X_j,X_{j'})_\alpha =\left\{  \begin{array}{ll}
\{ X\alpha \} & \mbox{ if } \alpha \in \sC\\
\Gamma \sS(X_j,X_{j'})_{\bar{Y}\alpha} & \mbox{ else } 
\end{array} \right.
$$
Let $\alpha: j' \ra j''$ be another morphism.  Then the composition of maps induces a pairing
$$\circ: M(X_{j'},X_{j''})_{\alpha'} \times  M(X_j,X_{j'})_\alpha  \lra M(X_j,X_{j''})_{\alpha' \alpha} $$
which is associative since ${\sC}$ is factorization-closed.
Define a cosimplicial $C$-algebra $\Pi_h^*Z$ by
$$\Pi_h^0Z=\Pi_{j\in {\sC}} Xj; \qquad \Pi^n_h Z=\Pi_{\sD_n}F(M\alpha, Xj_0)$$
where $\sD_n$ is the set of diagrams
$$\xymatrix{  \alpha: \: j_0 & j_1\ar[l]_{\alpha_1} & j_2 \ar[l]_{\alpha_2} & \cdots \ar[l] & j_n\ar[l]_{\alpha_n}}$$
in $\sD$, 
$$M\alpha =  M(X_{j_0},X_{j_1})_{\alpha_1}\times \cdots \times M(X_{j_n},X_{j_{n-1}})_{\alpha_n}$$
and $F(T,X)$ is the function spectrum, that is, the cotensor product of the space $T$ with the spectrum $X$. The cofaces $d^i$ are induced by the composition $\circ$ and are defined in exactly the same way as in \cite[Construction 3.3]{MR2030586}:
For $0<i<n+1$ the coface $d^i: \Pi_h^nZ \ra \Pi_h^{n+1}Z$ is defined in the factor indexed by $\alpha: j_0\la \cdots \la j_{n+1}$ by the composite
$$ \Pi_h^nZ \stackrel{\pi_{\alpha'}}{\ra} F(M\alpha',Xj_0) \stackrel{d^i_\alpha}{\ra} F(M\alpha, Xj_0)$$
where $$\alpha': j_0\stackrel{\alpha_1}{\la}j_1 \la \cdots \la j_{i-1} \stackrel{\alpha_i\alpha_{i+1}}{\la}j_{i+1} \stackrel{\alpha_{i+2}}{\la} \cdots \stackrel{\alpha_{n+1}}{\la} j_{n+1}$$
and
  $$ (d_\alpha^i g)(f_1,\ldots, f_{n+1})=g(f_1, \ldots, f_i\circ f_{i+1},\ldots, f_{n+1}).$$  
For $i=0$ it is defined in the same way with
$$\alpha': j_1\stackrel{\alpha_2}{\la}j_2 \la \cdots \stackrel{\alpha_{n+1}}{\la} j_{n+1}$$ and 
 $$ (d_\alpha^0 g)(f_1,\ldots, f_{n+1})=f_1(g(f_2, \ldots, f_{n+1})).$$
Finally, for $i=n+1$ set
$$\alpha': j_0\stackrel{\alpha_1}{\la}j_1 \la \cdots \stackrel{\alpha_{n}}{\la} j_{n}$$   and
 $$ (d_\alpha^{n+1} g)(f_1,\ldots, f_{n+1})=g(f_1, \ldots, f_{n})).$$
 Associativity ensures that the cosimplicial identities hold. The codegeneracies are defined via the evaluations on identity maps. Then \cite[Lemma 3.6]{MR2030586} shows that $\Pi_h^*Z$ is fibrant.

For an object $j$ of $\sD$ let $\sD \backslash  j$ be the under category of $j$. Its objects are morphisms from $j$ to some object $j'$ and its morphisms are commutative triangles
$$\xymatrix{ &j\ar[ld]\ar[rd]& \\ j'\ar[rr]^\alpha&&j''}.$$
Let $(\sD\backslash j )_\sC$ be  the subcategory of $\sD \backslash  j$ with the same objects and with morphisms $\alpha \in \sC$. Note that $(\sD\backslash j )_\sC$ is factorization-closed.
Let $\mu_j: D\backslash j \lra \sD$ be the evident forgetful functor which maps the subcategory  $(\sD\backslash j )_\sC$ to $\sC$. The $h_\infty$-diagram $Z=(X,\bar{Y})$ provides us with an $h_\infty$-diagram $\mu^*_j Z= (\mu^*_jX,\mu^*_j\bar{Y})$ over the pair $(\sD\backslash j ,(\sD\backslash j )_\sC)$. 
Set
$$ Y(j)=\mbox{Tot} (\Pi_h^*\mu^*_jZ).$$
Here, the totatlization Tot$(W)$ of a cosimplicial spectrum $W$ is the spectrum $F(\Delta[*] , W)$ of cosimplicial maps from the standard cosimplicial space $\Delta[*]$ to $W$. 
$Y$ is a functor from $\sD$ to $\sS$ in the obvious way:  a morphism $f:j\ra j'$ gives a functor $f^* : D\backslash j'\ra  D\backslash j $ compatible with the subcategories. This functor induces a map on the cosimplicial spaces and hence on their totalizations. We claim that $Y$ is the desired realization. \par
As in the absolute case there is a map $Y(j)\ra \bar{Y}(j)$ given by the projection
$$p_j: \mbox{Tot}(\Pi_h^*\mu^*_jZ)\lra F(\Delta[0],\Pi_h^0\mu^*_jZ)=\Pi_h^0\mu^*_jZ\stackrel{pr}{\lra} Xj$$
onto the factor indexed by the identity of $j$. The maps are natural in the homotopy category for the same reason as in the absolute case: the cosimplicial maps of degree one provide the homotopies  which make the naturality diagram commute. We claim that the maps $p_j$ are weak equivalences.
The Bousfield-Kan spectral sequence \cite[X.6,7]{MR0365573}[X.6,7] takes the form
$$ \pi_{t-s}(\mbox{Tot} (\Pi_h^*\mu^*_jZ)\Longleftarrow E_2^{s,t}\cong \pi^s (\pi_t(\Pi_h^*\mu^*_jZ)) $$
Since $Z$ is an $h_\infty$-diagram each $M_\alpha$ is contractible and hence
 $$\pi^s (\pi_t(\Pi_h^*\mu^*_jZ))\cong \lim_{\sD \backslash j} {}^{\! s}\pi_t(\mu^*_jZ).$$
 Finally, since  the identity map of $j$ is initial in $\sD \backslash j$ we have that the higher limits vanish and for $s=0$ it coincides with $\pi_tXj$. \par
 It remains to analyze the restriction of $Y$ to $\sC$. 
  Let $\Pi^*$ be the standard cosimplicial  replacement (cf. \cite[XI5.1]{MR0365573}) of a  diagram  $F$ on a small category $\sI$ given in codimension $n$ by
$$ \Pi^nF= \prod_{\sI_n} Fi_0 .$$
 There is an obvious cosimplicial map $$ \Pi^*_h\mu_j^*Z \lra \Pi^*\mu_j^*X$$
induced by projections onto the given factors since all $M_\alpha$ are points for morphisms in $\sC$.
 This yields  an equivalence  on  totalizations
 $$ Y_j \lra \mbox{Tot} (\Pi^*\mu_j^*X )= \holim_{\sC \backslash j} \mu^*_jX$$
which is natural in $\sC$. The natural map
$$ X_j \cong  \lim_{\sC \backslash j} \mu^*_jX \lra  \holim_{\sC \backslash j} \mu^*_jX$$
is again an equivalence by the Bousfield-Kan spectral sequence. This completes the desired zig-zag.
\end{proof}
\medskip
\bibliography{toda}

\providecommand{\bysame}{\leavevmode\hbox to3em{\hrulefill}\thinspace}
\providecommand{\MR}{\relax\ifhmode\unskip\space\fi MR }
\providecommand{\MRhref}[2]{%
  \href{http://www.ams.org/mathscinet-getitem?mr=#1}{#2}
}
\providecommand{\href}[2]{#2}
\begin{thebibliography}{DKS89}

\bibitem[BK72]{MR0365573}
A.~K. Bousfield and D.~M. Kan, \emph{Homotopy limits, completions and
  localizations}, Lecture Notes in Mathematics, Vol. 304, Springer-Verlag,
  Berlin-New York, 1972. \MR{0365573}

\bibitem[DH04]{MR2030586}
Ethan~S. Devinatz and Michael~J. Hopkins, \emph{Homotopy fixed point spectra
  for closed subgroups of the {M}orava stabilizer groups}, Topology \textbf{43}
  (2004), no.~1, 1--47. \MR{2030586}

\bibitem[DKS89]{MR984042}
W.~G. Dwyer, D.~M. Kan, and J.~H. Smith, \emph{Homotopy commutative diagrams
  and their realizations}, J. Pure Appl. Algebra \textbf{57} (1989), no.~1,
  5--24. \MR{984042}

\end{thebibliography}
\bibliographystyle{amsalpha}

\end{document}